\newtheorem{theorem}{Theorem}[section]
\newtheorem{lemma}[theorem]{Lemma}
\newtheorem{corollary}[theorem]{Corollary}
\newtheorem{proposition}[theorem]{Proposition}
\newtheorem{definition}[theorem]{Definition}
\newtheorem{example}[theorem]{Example}
\theoremstyle{remark}
\newtheorem{remark}[theorem]{\bf Remark}
\theoremstyle{remarks}
\numberwithin{equation}{section}
\def \d{{\delta}}
\def \e{{\varepsilon}}
\def \g{{\gamma}}
\def \G{{\Gamma}}
\def \l{{\lambda}}
\def \p{{\varphi}}
\def \m{{\mu}}
\def \s{{\sigma}}
\def \dd{{\rm d}}
\def \noi{{\noindent}}
  \def \qq{{\qquad}}
\def\E{{\mathbb E \,}}
\def \T{{\mathbb T}}
\def\P{{\mathbb P}}
\def\R{{\mathbb R}}
\def\Z{{\mathbb Z}}
\def\N{{\mathbb N}}
\def\C{{\mathbb C}}
 \font\sevenrm= cmr10 at 7,3 pt
\def\ddate {\sevenrm \ifcase\month\or January\or
February\or March\or April\or May\or June\or July\or
August\or September\or October\or November\or December\fi\! {\the\day}, \!{\sevenrm\the\year}}
 \title[An application of  Brascamp-Lieb's inequality]{An application of  Brascamp-Lieb's inequality}
\begin{document}   
 
\author{Michel J.\,G. Weber}

\address{IRMA, 10 rue du G\'en\'eral Zimmer,
67084 Strasbourg Cedex, France}
\email{michel.weber@math.unistra.fr}

\keywords{Gaussian  process, stationarity, decoupling coefficient,   Toeplitz forms, eigenvalues,  strong Szego  limit theorem.
\emph{AMS 2010 subject classification.}    Primary    60G15, 60G17; Secondary 60G10, 60G07.}

\maketitle 
 \begin{abstract}  We use Brascamp-Lieb's inequality to obtain new decoupling inequalities for general  Gaussian vectors, and for stationary cyclic Gaussian processes. In the second case, we use a version by Bump and Diaconis of the strong Szego limit theorem. This extends results of Klein, Landau and Shucker.
 \end{abstract}

\section{\bf Introduction-Results.}\label{s1}
Let   $X=\{X_j, j\in \Z\}$ be a centered Gaussian stationary  sequence, and let $\g(n) = \E X_0X_n$, $n \in \Z$.   We assume that  $X$ is strongly mixing or   equivalently, that $\lim_{n\to \infty}  \g(n)  =0$. When $ \g(n)$ tends sufficiently quickly to $0$, more independence is naturally gained in the structure of $X$.  This can be quantified under the form of  a decoupling inequality. For instance, if 
\begin{eqnarray}\label{hyp.dec} p(X) \ =\ \sum_{n\ge 0} {|\g(n) |\over \g(0)}<\infty ,
\end{eqnarray}
then for any finite collection $\{f_j, j\in J\}$ of  complex-valued
Borel-measurable functions,
\begin{eqnarray}\label{dec}
\Big|\E  \prod_{j\in J}f_j\big(X_j\big) \Big|\ \le\  \prod_{j\in J}\big\|f_j\big(X_0\big) \big \|_{p(X)}.\end{eqnarray}
  This  remarkable  inequality, which 
  so nicely condenses the independence properties of these Gaussian sequences,  is Theorem 3 ($d=1$) in Klein, Landau and Shucker \cite{KLS}. 
    \vskip 2 pt
  Clearly $|\E  \prod_{j\in J}f_j(X_j)|$ measures the degree of independence between the  random variables $f_j(X_j)$. 
 An immediate consequence of \eqref{dec} and of well-known Kathri-Sid\'ak's inequality, is that under assumption \eqref{hyp.dec}, we have  the following sharp two-sided estimate,
\begin{equation}\label{ks} \prod_{j\in J} \P \{  |X_j|\le x \}\le \P\big\{ \sup_{j\in J}  |X_j|\le x\big\}\le \prod_{j\in J} \P \{  |X_j|\le x \}^{1/p(X)},
\end{equation}
where $J$ is any finite index  and  $x$ any non-negative real.  \vskip 3 pt 
     However, one is often faced with probabilistic questions where  $p(X)=\infty$,
  or simply,  the process $X$ is not stationary in the sense required in \cite{KLS}.   At our knowledge, no extension of \eqref{dec} beyond condition \eqref{hyp.dec} exists in the literature, and it is naturally interesting to search what form could take a decoupling inequality  when \eqref{hyp.dec} fails to be satisfied. 
   \vskip 2 pt
   This is the question we address and study 
  in this work, which is also  somehow developing 
   the recent paper \cite{W}.
 We 
  clarify at this stage that our goal is to obtain results   valid for a broad range of Gaussian processes, and thus  not  (possibly)  quite sharp  estimates concerning specific cases, which is another problematic. Our aim is  also to link the question considered with the general theory of Toeplitz forms, and  draw the attention of the reader to the 
 interest of this  connection.
This is in that sense continuing the study made in  section 5 of \cite{W}. 
  \vskip 2 pt 
The role of the stationarity assumption of $X$ in  \cite{KLS}    is   crucial. The proof of \eqref{dec}  much relies  on an analytic inequality due to Brascamp and Lieb, which is of  relevance in the present work. 
  \vskip 2 pt 
    
A first  natural question can be stated as follows.   
  What form can take the decoupling inequality \eqref{dec} for an arbitrary Gaussian vector? As the law of a Gaussian vector, or more generally of a Gaussian process is completely characterized by its covariance function,   one can make  the question more consistent by asking which characteristics of the covariance matrix $\{\E X_iX_j\}_{i,j=1}^n$ of $X$ should be involved (and are to be evaluated):
  a particular function of its eigenvalues, or simply its determinant? It turns out that only the determinant suffices.  More precisely, we  prove  a general decoupling inequality, 
free of stationarity assumption.   \vskip 2 pt 
Before stating it, we first  extend the notion of decoupling coefficient introduced in  \cite{KLS} to arbitrary Gaussian vectors. 
 \begin{definition} Let $ X=\{X_i, 1\le i\le n\}$ be a centered Gaussian vector with non-degenerated components. The decoupling coefficient $p(X)$ of $X$ is defined by
 \begin{eqnarray*}
 p(X)&=& \max_{i=1}^n\sum_{1\le j\le n} \frac{|\E X_iX_j|}{\E X_i^2}
\,.
\end{eqnarray*}
\end{definition}
This is a natural characteristic of $X$. When $X$ is stationary, 
\begin{eqnarray*}
p(X)&=&\max_{i=1}^n\sum_{1\le j\le n} \frac{|\g(i-j)|}{\g(0)},\end{eqnarray*}
and so 
$$\sum_{1\le h\le n-1} \frac{|\g(h)|}{\g(0)}\le p(X)\le 2\sum_{1\le h\le n-1} \frac{|\g(h)|}{\g(0)}.$$ Further $p(X)=1$ if and only if $X$ has independent components. Some classes of examples with $p(X)\ll n$ or $p(X)\asymp n$ are given in section \ref{s4}.
\vskip 3 pt  
   
Our first main result states as follows.\vskip 2 pt  
\begin{theorem}\label{dec.ineq} Let $ X=\{X_i, 1\le i\le n\}$ be a centered Gaussian vector such that $\E X_i^2=\s_i^2>0$ for each  $1\le i\le n$, and with positive definite  covariance matrix $C$.
 Let $p$ be such that
\begin{eqnarray}\label{cond}
p\, \ge \, 2\, p(X).
\end{eqnarray}
 Then for any complex-valued measurable  functions $f_1, \ldots, f_n$ such that $f_i\in L^p(\R)$, for all $1\le i\le n$, the following inequality holds true, 
\begin{eqnarray}\label{ineq}
\bigg|\,\E\Big( \prod_{i=1}^n f_i(X_i)\Big)\,\bigg|
&\le &\frac{ 2^{\frac{n}{2}(1-\frac{1}{p})}\big(\prod_{i=1}^n\s_i\big)^{\frac{1}{p}}}
 {\det(C)^{\frac{1}{2p}}}\ \prod_{i=1}^n\Big( \E   f_i(X_i)^p\Big)^\frac{1}{p}.
\end{eqnarray}
 \end{theorem}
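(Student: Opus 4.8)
The plan is to realize the expectation as a Gaussian integral and then split the quadratic form in the exponent so that a single application of Hölder's inequality decouples the coordinates exactly; this is an elementary route that bypasses the full Brascamp--Lieb machinery while producing precisely the stated constant. Since $|\E\prod_i f_i(X_i)|\le \E\prod_i|f_i(X_i)|$, I may assume each $f_i\ge 0$. Writing $D=\mathrm{diag}(\sigma_1^2,\dots,\sigma_n^2)$, the starting point is
\[
\E\prod_{i=1}^n f_i(X_i)=\frac{1}{(2\pi)^{n/2}\det(C)^{1/2}}\int_{\R^n}\prod_{i=1}^n f_i(x_i)\,e^{-\frac12 x^{\top}C^{-1}x}\,dx.
\]

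The key algebraic step is the factorization
\[
e^{-\frac12 x^{\top}C^{-1}x}=e^{-\frac{1}{2p}x^{\top}D^{-1}x}\cdot e^{-\frac12 x^{\top}\left(C^{-1}-\frac1p D^{-1}\right)x},
\]
to which I apply Hölder's inequality with exponents $p$ and $p'=p/(p-1)$, the first factor carrying the $f_i$. The first resulting integral is $\int \prod_i f_i(x_i)^p\,e^{-\frac12 x^{\top}D^{-1}x}\,dx$: its weight is \emph{diagonal}, so by Fubini it factorizes into $\prod_i\int f_i^p e^{-x_i^2/2\sigma_i^2}\,dx_i=\prod_i\sqrt{2\pi}\,\sigma_i\,\E f_i(X_i)^p$, and this is exactly where the decoupling occurs. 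The second integral, $\int e^{-\frac{p'}{2}x^{\top}(C^{-1}-\frac1p D^{-1})x}\,dx$, is a pure Gaussian integral equal to an explicit power of $\det\!\big(C^{-1}-\frac1p D^{-1}\big)$.

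For the split to be legitimate the matrix $C^{-1}-\frac1p D^{-1}$ must be positive definite, and this is precisely where the hypothesis $p\ge 2p(X)$ enters. I would verify it through the correlation matrix $R=D^{-1/2}CD^{-1/2}$: since $R$ is similar to $D^{-1}C$, whose diagonal entries equal $1$ and whose $i$-th absolute row sum is at most $p(X)$, Gershgorin's theorem gives $\lambda_{\max}(R)\le p(X)$. Hence every eigenvalue $\mu_k$ of $R$ lies in $(0,p(X)]\subseteq(0,p/2]$, so $C^{-1}-\frac1p D^{-1}\succ 0$ and, crucially, $p-\mu_k\ge p/2$ for every $k$; the factor $2$ in the hypothesis is used exactly to secure this quantitative gap.

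It then remains to collect the constants. Using $\det C=(\prod_i\sigma_i^2)\det R$ and expressing $\det\!\big(C^{-1}-\frac1p D^{-1}\big)$ through the eigenvalues $\mu_k$, the powers of $2\pi$ cancel, the surviving powers of $\prod_i\sigma_i$ together with $\det R$ assemble into $(\prod_i\sigma_i)^{1/p}\det(C)^{-1/(2p)}$, and the leftover factor is an elementary power of $p-1$ times $\prod_k(p-\mu_k)^{-1/(2p')}$. Invoking the identity $\frac{n}{2p'}=\frac n2\bigl(1-\frac1p\bigr)$ and the bound $p-\mu_k\ge p/2$ shows this leftover factor is at most $2^{\frac n2(1-\frac1p)}$ (in fact strictly smaller, so the bound is not tight), which yields the theorem. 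The main obstacle is thus not any single estimate but the combination of two points: verifying that $p\ge 2p(X)$ simultaneously forces the positive-definiteness of $C^{-1}-\frac1p D^{-1}$ and the clean gap $p-\mu_k\ge p/2$ responsible for the factor $2^{\frac n2(1-1/p)}$, and then carrying out the bookkeeping of constants so that the powers of $2\pi$ and of $\prod_i\sigma_i$ telescope to leave exactly $(\prod_i\sigma_i)^{1/p}\det(C)^{-1/(2p)}$.
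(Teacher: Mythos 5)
Your proof is correct, and it reaches the stated constant by a genuinely different route than the paper, even though the skeleton is the same: both arguments split $C^{-1}=\frac1p I(\underline\gamma^{-1})+B$ with $B=C^{-1}-\frac1p I(\underline\gamma^{-1})$ (your $M$), and both spend the hypothesis $p\ge 2p(X)$ as a diagonal-dominance condition that is ultimately responsible for the factor $2^{\frac n2(1-\frac1p)}$. The difference is the machinery wrapped around this splitting. The paper does not use plain H\"older: it invokes Brascamp--Lieb's Theorem 6 (Proposition \ref{p1}), namely that the best constant $E_B$ in $\int\prod_i g_i(x_i)e^{-\frac12\langle \underline x,B\underline x\rangle}\dd\underline x\le E_B\prod_i\|g_i\|_p$ is attained by Gaussian $g_i$'s, then bounds $E_B$ via Bellman's determinant-convexity inequality $\det(\l U+(1-\l)V)\ge\det(U)^\l\det(V)^{1-\l}$ (Lemma \ref{l1}), and finally lower-bounds $\det(pI(\underline\gamma)-C)$ by Ostrowski's theorem on determinants with dominant principal diagonal (Lemma \ref{l2}). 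You replace the first two steps by a single application of H\"older with exponents $(p,p')$, whose Gaussian second factor computes exactly; this is elementary and in fact yields a slightly sharper intermediate constant (the paper's bound on $E_B$ divided by $(p')^{n/(2p')}$). You then replace Ostrowski by Gershgorin applied to $D^{-1}C$, whose absolute row sums are by definition at most $p(X)$, so that all eigenvalues $\mu_k$ of the correlation matrix satisfy $\mu_k\le p(X)\le p/2$; this gives both the positive definiteness of $B$ and the gap $p-\mu_k\ge p/2$, and your final relaxation $\big(2(1-\frac1p)\big)^{\frac n2(1-\frac1p)}\le 2^{\frac n2(1-\frac1p)}$ lands on the stated bound. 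What your route buys is self-containedness: no appeal to the Gaussian-extremality theorem of Brascamp--Lieb (somewhat against the spirit of the paper, whose theme is precisely the application of that inequality) and no external determinant lemmas. What the paper's route buys is conceptual: it exhibits the constant being estimated as the \emph{optimal} one ($E_B$) in the underlying functional inequality, and its Ostrowski step bounds $\det(pI(\underline\gamma)-C)$ directly, with no spectral computation.
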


\vskip 6 pt 
From Theorem \ref{dec.ineq} and Kathri-Sid\'ak's inequality we also get, 
 \begin{corollary}\label{co1}Let $ X=\{X_i, 1\le i\le n\}$ be a centered Gaussian vector such that $\E X_i^2=\s_i>0$, $1\le i\le n$, and  with positive definite  covariance matrix $C$. Assume that assumption \eqref{cond} is fulfilled for some $p\ge 2$. Then  for any $\e_i >0$, $i=1,\ldots, n$,
 \begin{eqnarray*}
\prod_{i=1}^n\P\big\{  |X_i|\le \e_i\big\} \ \le \ \P\Big\{ \sup_{i=1}^n \frac{|X_i|}{\e_i}\le 1\Big\}
&\le &
\frac{2^{\frac{n}{2}}}{\det(C)^{\frac{1}{2p}} }  \ 
\prod_{i=1}^n\Big( \frac{\,\s_i}{\sqrt 2} \ \P\big\{  |X_i|\le \e_i\big\}\Big)^\frac{1}{p}.
\end{eqnarray*}
\end{corollary}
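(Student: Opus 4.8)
The plan is to read the two inequalities separately: the left one is nothing but Khatri--Sid\'ak's inequality for a centered Gaussian vector, while the right one is a direct specialization of Theorem~\ref{dec.ineq} to indicator functions. First I would note that the event $\{\sup_{i}|X_i|/\e_i\le 1\}$ coincides with $\{|X_i|\le\e_i \text{ for all } i\}$, so that
$$
\P\Big\{ \sup_{i=1}^n \frac{|X_i|}{\e_i}\le 1\Big\}\ =\ \P\big\{|X_i|\le\e_i,\ 1\le i\le n\big\}.
$$
Khatri--Sid\'ak's inequality asserts precisely that this probability dominates the product $\prod_{i=1}^n\P\{|X_i|\le\e_i\}$ (the measures of the symmetric slabs), which gives the left-hand inequality with no further work.

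For the right-hand inequality I would apply Theorem~\ref{dec.ineq} to the family $f_i=\mathbf 1_{\{|t|\le\e_i\}}$, $1\le i\le n$. These functions lie in $L^p(\R)$ for every $p\ge 2$, so condition \eqref{cond} (assumed to hold for some $p\ge 2$) lets the theorem apply. The key simplifications are twofold: on the one hand, since the $f_i$ are nonnegative indicators, $\prod_{i=1}^n f_i(X_i)=\mathbf 1\{|X_i|\le\e_i,\ \forall i\}$, whence $\E\big(\prod_i f_i(X_i)\big)=\P\{\sup_i|X_i|/\e_i\le1\}$ is itself nonnegative and the modulus on the left of \eqref{ineq} is inert; on the other hand, an indicator satisfies $f_i^{\,p}=f_i$, so that $\E f_i(X_i)^p=\P\{|X_i|\le\e_i\}$ and the $L^p$-factors collapse to $\prod_i\big(\P\{|X_i|\le\e_i\}\big)^{1/p}$.

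Substituting these into \eqref{ineq} yields
$$
\P\Big\{ \sup_{i=1}^n \frac{|X_i|}{\e_i}\le 1\Big\}\ \le\ \frac{2^{\frac n2(1-\frac1p)}\big(\prod_{i=1}^n\s_i\big)^{1/p}}{\det(C)^{1/(2p)}}\ \prod_{i=1}^n\big(\P\{|X_i|\le\e_i\}\big)^{1/p}.
$$
The last step is purely cosmetic: I would recast the constant in the form advertised in the statement by distributing $\s_i$ and the powers of $2$ across the product, using $\prod_i(\s_i/\sqrt2)^{1/p}=2^{-n/(2p)}\big(\prod_i\s_i\big)^{1/p}$ and $\tfrac n2-\tfrac n{2p}=\tfrac n2(1-\tfrac1p)$, so that the right-hand side becomes exactly $2^{n/2}\det(C)^{-1/(2p)}\prod_i\big(\tfrac{\s_i}{\sqrt2}\,\P\{|X_i|\le\e_i\}\big)^{1/p}$. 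There is no genuine obstacle here; the only thing to watch is the bookkeeping of the exponents of $2$ and of $\s_i$ when reorganizing the constant, and the harmless verification that the absolute value in \eqref{ineq} may be dropped because the integrand is a nonnegative indicator.
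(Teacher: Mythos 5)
Your proof is correct and follows exactly the route the paper intends (the paper states the corollary as an immediate consequence of Theorem~\ref{dec.ineq} and Khatri--Sid\'ak's inequality, without writing out details): the lower bound is Khatri--Sid\'ak, and the upper bound is Theorem~\ref{dec.ineq} applied to the indicators $f_i=\mathbf 1_{\{|t|\le\e_i\}}$, using $f_i^p=f_i$ and the identity $2^{\frac n2(1-\frac1p)}\big(\prod_i\s_i\big)^{1/p}=2^{n/2}\prod_i(\s_i/\sqrt2)^{1/p}$. The exponent bookkeeping in your final rewriting is exact, so nothing is missing.
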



\vskip 6 pt For estimating $\det(C)$, we  place ourselves in the setting of Toeplitz matrices theory where this important question has been and is still  much investigated. A salient aspect of  this theory is that $\det(C)$ can be computed, sometimes  with high degree of accuracy. We refer to the nice book of  Grenander and 
Szeg\"o \cite{GS}, we also refer to \cite{W} for a general presentation of the methods used, except for the Laplace transform method,
essentially  in the setting of stationary Gaussian processes.
Let $f(t)= \sum_{-\infty}^\infty d_n e^{int}$ be a function on the unit circle $\T$. 
Let $T_{n-1}(f)$ be the Toeplitz matrix defined by $T_{n-1}(f)= \{ d_{j-i}\}_{i,j=0}^{n-1}$ and let $D_{n-1}(f)=\det(T_{n-1}(f))$.

\vskip 2 pt
This   corresponds to the case when   $X$ has a 
spectral density function $f(t)$, summable over $[-\pi,\pi]$, and is thus of relevance in our setting.
 Indeed, as
\begin{eqnarray}\label{te} d_{n}=\frac1{2\pi}\int_{- \pi}^{ \pi} e^{-in t} f(t)   \dd t, \qq  \quad  n\in \Z ,
\end{eqnarray}
$T_{n-1}(f)$ is just the $n$-th finite section
of the infinite Toeplitz matrix given by the covariance matrix of the process $X$.  Further as $f\in L^1([-\pi, \pi])$,  by the Riemann-Lebesgue lemma, we have
 $ \lim_{n\to \infty}  d_n =0$.  
 \vskip 3 pt

  In the considerable literature  on Toeplitz operators and determinants, $f$ is usually called a symbol or a generating function (generating $(T_{n-1}(f))_n$) and $f$ needs not being a density function. Toeplitz determinants with rational symbols  occur for instance in statistical mechanics and quantum mechanics, see \cite{BF}. They can be calculated using a formula obtained by Day \cite{D}.
\vskip 2 pt
For Toeplitz matrices generated by a density function, $D_{n-1}(f)$ can also be expressed as an integral over the unitary group $U(n)$, by means of the Heine-Szeg\"o identity, 
\begin{equation}\label{hs}D_{n-1}(f) \, =\, \int_{U(n)} \Phi_{n,f}(g) \dd g.
\end{equation}
Here 
the integration path is taken with respect to the normalized Haar measure on $U(n)$, and $\Phi_{n,f} (g)$ is defined by $\Phi_{n,f} (g)= f(t_1)\ldots f(t_n)$, where $t_1,\ldots,t_n$ are  the eigenvalues of $g$. 
This  identity is the starting point of  the proof of a nice form of  the strong Szeg\"o limit theorem established in   \cite{BD} by Bump and Diaconis. 
 
 \vskip 3 pt Using their result we also prove

\begin{theorem}\label{p2} Let   $X=\{X_j, j\in \Z\}$ be a centered Gaussian stationary  sequence with unit variance and 
spectral density function
$f(t)$.   Let  $\log f(t) = \sum_{k\in \Z} c_ke^{ikt}$ where the $c_k$ satisfy the following conditions
\begin{equation}\label{c1} \sum_{k\in \Z} |c_k|<\infty,
\end{equation}
\begin{equation}\label{c2} \sum_{k\in \Z} |k|\,|c_k|^2<\infty.
\end{equation}

  Then there exist  reals $\d_n\downarrow 0$, such that for any integer $n\ge 2$, 
 any complex-valued measurable  functions $f_1, \ldots, f_n$ with  $f_i\in L^p(\R)$, for all $1\le i\le n$,  
 where 
\begin{eqnarray}\label{cond1a}
p\, \ge \, 2\,p(X),
\end{eqnarray}
the following inequality holds true, 
\begin{eqnarray*}
\bigg|\E\Big( \prod_{i=1}^n f_i(X_i)\Big)\bigg|
&\le & 
 \frac{(1+\d_n)\, 2^{\frac{n}{2}} }{(2b(f)G(f))^{\frac{n }{2p}}}\  \prod_{i=1}^n\Big( \E |  f_i(X_i)|^p\Big)^\frac{1}{p},
\end{eqnarray*}
where $b(f)=\exp\big\{   \sum_{k=1}^\infty kc_kc_{-k}\big\}$
 and $G(f)$ is the geometric mean of $f$, namely
\begin{equation}\label{G(f)}
G(f) =\exp\big\{\frac{1}{2\pi}\int_{- \pi}^{ \pi}
\log f(t) \dd t \big\}.
\end{equation} 

 \end{theorem}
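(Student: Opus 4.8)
The plan is to combine the distribution-free decoupling inequality of Theorem~\ref{dec.ineq} with a sharp evaluation of the relevant Toeplitz determinant supplied by the strong Szeg\"o limit theorem in the form of Bump and Diaconis. First I would reduce to nonnegative integrands: since $|\E\prod_i f_i(X_i)|\le \E\prod_i|f_i(X_i)|$, it suffices to apply the inequality to the functions $|f_i|$, which replaces the right-hand factors $(\E f_i(X_i)^p)^{1/p}$ of Theorem~\ref{dec.ineq} by $(\E|f_i(X_i)|^p)^{1/p}$, exactly the form sought. The positivity of $f$, guaranteed by \eqref{c1} (which makes $\log f=\sum_k c_ke^{ikt}$ a bounded continuous function, so $f$ is bounded and bounded away from $0$), ensures that every section $T_{n-1}(f)$ is positive definite and that Theorem~\ref{dec.ineq} applies.

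Next, because $X$ has unit variance we have $\sigma_i=1$ for every $i$, so the factor $(\prod_i\sigma_i)^{1/p}$ disappears, and the $n\times n$ covariance matrix $C$ of $(X_1,\dots,X_n)$ is precisely the Toeplitz section $T_{n-1}(f)$ generated by the spectral density, whence $\det(C)=D_{n-1}(f)$. Thus Theorem~\ref{dec.ineq} already gives
\begin{equation*}
\Big|\E\prod_{i=1}^n f_i(X_i)\Big|\le \frac{2^{\frac n2(1-\frac1p)}}{D_{n-1}(f)^{\frac1{2p}}}\,\prod_{i=1}^n\big(\E|f_i(X_i)|^p\big)^{1/p},
\end{equation*}
and, the determinant sitting in the denominator, it remains only to bound $D_{n-1}(f)$ from below.

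The core step is therefore the determinant asymptotic. Conditions \eqref{c1} and \eqref{c2} are exactly the hypotheses under which the strong Szeg\"o limit theorem holds; its Bump--Diaconis version, derived from the Heine--Szeg\"o identity \eqref{hs} over $U(n)$, yields
\begin{equation*}
\frac{D_{n-1}(f)}{G(f)^{n}}\longrightarrow \exp\Big\{\sum_{k=1}^\infty kc_kc_{-k}\Big\}=b(f),\qquad n\to\infty ,
\end{equation*}
so that $D_{n-1}(f)=b(f)\,G(f)^n(1+r_n)$ with $r_n\to 0$. Substituting this into the displayed bound, writing $2^{\frac n2(1-\frac1p)}=2^{n/2}\,2^{-n/(2p)}$, and absorbing $2^{-n/(2p)}$ together with the geometric-mean factor into a single power of $2G(f)$, produces the announced prefactor $(1+\delta_n)\,2^{n/2}\big(2b(f)G(f)\big)^{-n/(2p)}$ up to the remainder $(1+r_n)^{-1/(2p)}$.

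The final, and most delicate, point is to turn this remainder into one sequence $\delta_n\downarrow 0$ valid uniformly in $p$ and in the functions $f_i$. Since the Szeg\"o asymptotic concerns the determinant alone, $r_n$ depends only on $f$ and $n$, not on the $f_i$. Using $p\ge 2p(X)\ge 2$, hence $0<\tfrac1{2p}\le\tfrac14$, the factor $(1+r_n)^{-1/(2p)}$ is dominated by its value at the endpoint $p=2$, so setting $\delta_n:=\sup_{m\ge n}\big[(1+r_m)^{-1/4}-1\big]_+$ gives a nonincreasing null sequence controlling the error for all admissible $p$ simultaneously. I expect the main obstacle to be precisely this quantitative and uniform use of the Szeg\"o theorem: one must extract from the Bump--Diaconis result a genuine $o(1)$ remainder independent of the $f_i$ and monotonize it into $\delta_n$, whereas the decoupling part reduces to a direct invocation of Theorem~\ref{dec.ineq}.
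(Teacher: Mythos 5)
Your proposal follows the paper's proof essentially step for step: apply Theorem \ref{dec.ineq} with $\sigma_i=1$, so that the covariance matrix is the Toeplitz section $T_{n-1}(f)$ and $\det(C)=D_{n-1}(f)$, then bound $D_{n-1}(f)$ from below by the Bump--Diaconis form of the strong Szeg\H{o} limit theorem, $D_{n-1}(f)\sim G(f)^n b(f)$, and absorb the multiplicative error into $\delta_n$. Your extra care about making $\delta_n$ uniform in $p$ (via $0<\tfrac1{2p}\le\tfrac14$, using $p\ge 2p(X)\ge 2$) is a point the paper passes over in silence, and is a genuine, if small, improvement in rigor.

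There is, however, one algebraic step in your write-up that does not go through as claimed. Substituting $D_{n-1}(f)=b(f)\,G(f)^n(1+r_n)$ into the bound of Theorem \ref{dec.ineq} yields the prefactor
\begin{equation*}
2^{\frac n2}\,\bigl(2G(f)\bigr)^{-\frac{n}{2p}}\,b(f)^{-\frac{1}{2p}}\,(1+r_n)^{-\frac{1}{2p}},
\end{equation*}
in which $b(f)$ carries the exponent $-\tfrac{1}{2p}$, not $-\tfrac{n}{2p}$ as in the statement. Since $\log f$ is real-valued, $c_{-k}=\overline{c_k}$, hence $b(f)=\exp\{\sum_{k\ge1}k|c_k|^2\}\ge 1$, so $b(f)^{-n/(2p)}\le b(f)^{-1/(2p)}$: the stated bound is strictly stronger than what the substitution produces, and the discrepancy $b(f)^{(n-1)/(2p)}$ grows geometrically in $n$ whenever $b(f)>1$, so it cannot be absorbed into any sequence $\delta_n\downarrow 0$. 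Thus your claim that the substitution ``produces the announced prefactor'' is not literally correct. You should know that the paper has exactly the same defect: its proof of Theorem \ref{p2} ends with the factor $b(f)^{-\frac{1}{2p}}\,2^{\frac n2(1-\frac1p)}\,G(f)^{-\frac{n}{2p}}$, which does not match the theorem's stated denominator $(2b(f)G(f))^{\frac{n}{2p}}$. In other words, your argument establishes precisely what the paper's argument establishes --- the inequality with $b(f)^{\frac{1}{2p}}(2G(f))^{\frac{n}{2p}}$ in the denominator --- and the mismatch with the stated form is an inconsistency in the statement itself (most plausibly a misprint), not a flaw peculiar to your proof; but a careful write-up should flag it rather than assert the identification.
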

\vskip 2 pt 
  
\begin{corollary}\label{co2}
Let   $X=\{X_j, j\in \Z\}$ be a centered Gaussian stationary  sequence with unit variance and 
spectral density function
$f(t)$ satisfying  conditions \eqref{c1} and \eqref{c2}.
 \vskip 2 pt  Then there exist  reals $\d_n\downarrow 0$, such that for any integer $n\ge 2$, 
 any  $p$ satisfying \eqref{cond1a},
we have for any $\e_i >0$, $i=1,\ldots, n$,
 \begin{eqnarray*}
\P\Big\{ \sup_{i=1}^n |X_i|\le \e_i\Big\}
&\le &
 \frac{(1+\d_n)\, 2^{\frac{n}{2}} }{(2b(f)G(f))^{\frac{n }{2p}}}\ \prod_{i=1}^n\big( \P\big\{  |X_0|\le \e_i\big\}\big)^\frac{1}{p}
.
\end{eqnarray*}
\end{corollary}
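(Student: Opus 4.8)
The plan is to derive Corollary \ref{co2} as a direct specialization of Theorem \ref{p2} to indicator functions, using stationarity to identify the marginal laws.

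First I would apply Theorem \ref{p2} with the choice $f_i = \mathbf{1}_{[-\e_i,\e_i]}$ for $1 \le i \le n$, the indicator of the symmetric interval of half-width $\e_i$. Each such $f_i$ is bounded and compactly supported, hence lies in $L^p(\R)$ for every $p \ge 1$; in particular the hypothesis $f_i \in L^p(\R)$ of Theorem \ref{p2} is satisfied for any $p$ fulfilling \eqref{cond1a}. The constants $\d_n$, $b(f)$ and $G(f)$ appearing in the conclusion are then inherited verbatim from Theorem \ref{p2}.

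Next I would evaluate the two sides of the inequality for this choice. On the left, the product $\prod_{i=1}^n f_i(X_i)$ is the indicator of the event $\{|X_i| \le \e_i \text{ for all } 1 \le i \le n\}$, so that $\big|\E\big(\prod_{i=1}^n f_i(X_i)\big)\big| = \P\{\sup_{1\le i\le n} |X_i|/\e_i \le 1\}$, which is the quantity written $\P\{\sup_{i=1}^n |X_i| \le \e_i\}$ in the statement. On the right, since $f_i$ takes only the values $0$ and $1$, we have $|f_i(X_i)|^p = f_i(X_i) = \mathbf{1}_{\{|X_i| \le \e_i\}}$, whence $\E|f_i(X_i)|^p = \P\{|X_i| \le \e_i\}$. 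Here the stationarity hypothesis enters: because $X$ is stationary with unit variance, each $X_i$ has the same law as $X_0$, so $\P\{|X_i| \le \e_i\} = \P\{|X_0| \le \e_i\}$ and therefore $\big(\E|f_i(X_i)|^p\big)^{1/p} = \big(\P\{|X_0| \le \e_i\}\big)^{1/p}$.

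Substituting these evaluations into the bound of Theorem \ref{p2} yields precisely the asserted inequality. There is no genuine analytic obstacle here — the entire content is supplied by Theorem \ref{p2}; the only points requiring care are the elementary verification that bounded compactly supported indicators belong to $L^p(\R)$, and the distributional reduction $X_i \stackrel{d}{=} X_0$ furnished by stationarity, which is what permits the right-hand side to be expressed uniformly through the single marginal $\P\{|X_0| \le \e_i\}$ rather than through the individual laws of $X_i$.
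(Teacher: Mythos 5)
Your proof is correct and is exactly the intended derivation: the paper leaves Corollary \ref{co2} without an explicit proof because it follows from Theorem \ref{p2} precisely as you argue, by taking $f_i=\mathbf{1}_{[-\e_i,\e_i]}$ (which lies in $L^p(\R)$), identifying $\E\prod_i f_i(X_i)$ with $\P\{\sup_{i}|X_i|\le \e_i\}$, and using stationarity to replace each $\E|f_i(X_i)|^p=\P\{|X_i|\le\e_i\}$ by $\P\{|X_0|\le\e_i\}$. No further comment is needed.
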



\section{\bf Proof of Theorem \ref{dec.ineq}.}\label{s2}
We first state the proposition below which follows from Theorem 6 in Brascamp and Lieb \cite{BL}. We also refer to \cite{KLS}. It should be indicated here that all that is required for the application of this Theorem, is that the   matrix be positive definite. This one  is written  in terms of its eigenvectors and eigenvalues, and the eigenvectors are  $a^j$, $j=k+1,\ldots, k+m$, which have nothing to do with the vectors 
 $a^j$, $j=1,2,\ldots, k$ of their Theorem 1. This point was clarified to the author by Abel Klein \cite{K}. Introduce some notation. Let    $I$  be the $n\times n$ identity matrix and let $\underline b=(b_1, \ldots ,b_n)\in \R^n$. Then $  I(\underline b)$ will denote throughout the diagonal matrix whose values on the diagonal are the corresponding values of $\underline b$. Also, when $b_i\neq 0$ for each $i=1, \ldots, n$, we will  use the notation $\underline b^{-1}=(b_1^{-1}, \ldots ,b_n^{-1})$.
\begin{proposition} \label{p1}Let $1\le p<\infty$. Let $B$ be a positive definite $n\times n$ matrix. Then for any measurable  functions $g_1, \ldots, g_n$ such that $g_i\ge 0$ and $g_i\in L^p(\R)$, $1\le i\le n$, the following inequality holds true, 
\begin{eqnarray}\label{BL.ineq}
\int_{\R^n}\Big( \prod_{i=1}^n g_i(x_i)\Big) \exp\Big\{ -\frac12\langle \underline x, B\underline x\rangle\Big\}\dd \underline x&\le & E_B\ \prod_{i=1}^n\Big( \int_{\R}    g_i(x)^p\dd x\Big)^\frac{1}{p},
\end{eqnarray}
where
\begin{eqnarray}\label{EB}
E_B\,=\,(2\pi)^{\frac{n}{2}(1-1/p)}p^{\frac{n}{2p}} \, \sup_{b_i>0\atop i=1,\ldots,n}\frac{\prod_{i=1}^nb_i^\frac{1}{2p}}{ \det (B+I(\underline b))^\frac12}.
\end{eqnarray}
\end{proposition}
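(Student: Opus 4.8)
The plan is to read off Proposition \ref{p1} from the extremality of Gaussians in the Brascamp--Lieb inequality, the evaluation of $E_B$ being then a short explicit calculation. First I would invoke Theorem 6 of \cite{BL} in the form suited to our situation: for the positive definite matrix $B$, the coordinate functionals $\underline x\mapsto x_i$ and the single exponent $p$, the best constant in \eqref{BL.ineq} --- that is, the supremum over all admissible $g_i\ge 0$ of the ratio of its two sides --- is already attained, in the limiting sense, on the family of centered Gaussian inputs $g_i(x)=\exp\{-\frac{b_i}{2}x^2\}$, $b_i>0$. This is precisely the substance of the Brascamp--Lieb theorem; the only delicate point, and the one flagged in the remark preceding the statement, is to reconcile the abstract directions $a^j$ of \cite{BL} with the concrete coordinate projections together with the Gaussian weight $\exp\{-\frac12\langle\underline x,B\underline x\rangle\}$, after which positive definiteness of $B$ is the sole hypothesis that is used.

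Granting this reduction, it remains to evaluate the ratio on the family $g_i(x)=\exp\{-\frac{b_i}{2}x^2\}$. The numerator is the Gaussian integral
\[
\int_{\R^n}\exp\Big\{-\tfrac12\langle\underline x,(B+I(\underline b))\underline x\rangle\Big\}\,\dd\underline x=\frac{(2\pi)^{n/2}}{\det(B+I(\underline b))^{1/2}},
\]
which converges because $B+I(\underline b)$, the sum of the positive definite matrix $B$ and the positive definite diagonal $I(\underline b)$, is again positive definite. The denominator factorises: since $\int_{\R}\exp\{-\frac{pb_i}{2}x^2\}\,\dd x=(2\pi/(pb_i))^{1/2}$, one gets
\[
\prod_{i=1}^n\Big(\int_{\R}g_i(x)^p\,\dd x\Big)^{1/p}=(2\pi)^{\frac{n}{2p}}\,p^{-\frac{n}{2p}}\prod_{i=1}^n b_i^{-\frac{1}{2p}}.
\]

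Dividing the two displays, the ratio produced by the input $\underline b$ equals
\[
(2\pi)^{\frac{n}{2}(1-\frac1p)}\,p^{\frac{n}{2p}}\,\frac{\prod_{i=1}^n b_i^{\frac{1}{2p}}}{\det(B+I(\underline b))^{1/2}},
\]
and taking the supremum over $b_i>0$ returns exactly the constant $E_B$ of \eqref{EB}; hence \eqref{BL.ineq} holds with that constant. I expect the genuine difficulty to lie not in this computation, which is routine, but in the faithful application of \cite{BL}: one must check that the version of Theorem 6 invoked does deliver a sharp constant in the shape of a supremum over Gaussian scalings, and that the geometry of the coordinate projections meets its hypotheses. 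The requirements $g_i\ge 0$ and $B$ positive definite enter precisely here, guaranteeing both that the Gaussian test functions are admissible extremizers and that every integral in sight converges.
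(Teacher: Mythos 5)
Your proposal is correct and follows exactly the paper's own route: Proposition \ref{p1} is obtained by invoking Theorem 6 of \cite{BL} (extremality of Gaussian inputs, modulo the point about the directions $a^j$ clarified in the text), and then evaluating the ratio on the family $g_i(x)=\exp\{-\frac{b_i}{2}x^2\}$, which is precisely the computation carried out in Remark \ref{REB} and yields the constant $E_B$ of \eqref{EB}. Your Gaussian integral and $L^p$-norm evaluations agree with the paper's, so there is nothing to add.
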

 \vskip 2 pt 
\begin{remark}\label{REB}
 The constant $E_B$  is defined in Theorem 6 by 
\begin{eqnarray*}
E_B&=&\sup_{b_i>0\atop i=1,\ldots,n}\frac{\int_{\R^n}\big( \prod_{i=1}^n \exp\{-\frac12 b_ix_i^2\}\big) \exp\{ -\frac12\langle \underline x, B\underline x\rangle\}\dd \underline x}{\prod_{i=1}^n\big( \int_{\R}   \exp\{-\frac{p}{2} b_ix^2\}\dd x\big)^\frac{1}{p}},
\end{eqnarray*}
namely  inequality \eqref{BL.ineq}  is maximal when the $g_i$'s are Gaussian. 
\vskip 2 pt 
But it is elementary
that (see also \cite{KLS}, p.\,705, after (4))

 \begin{eqnarray*}
\frac{\int_{\R^n}\big( \prod_{i=1}^n \exp\{-\frac12 b_ix_i^2\}\big) \exp\{ -\frac12\langle \underline x, B\underline x\rangle\}\dd \underline x}{\prod_{i=1}^n\big( \int_{\R}   \exp\{-\frac{p}{2} b_ix^2\}\dd x\big)^\frac{1}{p}}
 &=& \frac{\int_{\R^n} \exp\{ -\frac12\langle \underline x, (B+I(\underline b))\underline x\rangle\}\dd \underline x}{\prod_{i=1}^n\big( \frac{2\pi}{pb_i}  \big)^\frac{1}{2p}}
\cr &=&(2\pi)^{\frac{n}{2}(1-\frac{1}{p})}p^{\frac{n}{2p}} \,\frac{\prod_{i=1}^nb_i^\frac{1}{2p}}{ \det (B+I(\underline b))^\frac12}.
\end{eqnarray*}
So that\begin{eqnarray*}
E_B\,=\,(2\pi)^{\frac{n}{2}(1-\frac{1}{p})}p^{\frac{n}{2p}} \, \sup_{b_i>0\atop i=1,\ldots,n}\frac{\prod_{i=1}^nb_i^\frac{1}{2p}}{ \det (B+I(\underline b))^\frac12}.
\end{eqnarray*}
\end{remark}
\vskip 3 pt
\begin{remark}\label{RKLS}
In the proof of Theorem 3 in \cite{KLS}, Klein, Landau and Shucker  apply Theorem 6 under the form of that Proposition, p.\,705, with the choice $B=C^{-1}-\frac{1}{pc}I$, where  $C$ is the covariance matrix of the process $X$, $p$ is the decoupling coefficient of $X$, $c=\E X_0^2$. Further $g_i(x)= f_i(x) e^{-(1/2pc)x^2}$, for $i=1,\ldots, n$. 

This requires that $B$ is positive definite, or equivalenly that $pcI-C$ is positive definite. This is ensured by the choice of $p$ made in \cite{KLS}.   
\end{remark}
\vskip 3 pt

\vskip 5 pt
In the next lemma, we establish a general bound of $E_B$.  
\begin{lemma}\label{l1}
 \begin{eqnarray*}
E_B&\le&
\frac{(2\pi)^{\frac{n}{2}(1-\frac{1}{p})}}{\det(B)^{\frac12(1-\frac{1}{p})}} .
\end{eqnarray*}

\end{lemma}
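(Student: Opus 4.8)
The plan is to reduce the asserted bound on $E_B$ to a single determinant inequality, and then to a one-variable computation. Starting from formula \eqref{EB}, I would divide both sides of the desired bound by $(2\pi)^{\frac{n}{2}(1-\frac1p)}$ and square, so that the claim becomes equivalent to showing that for every choice of reals $b_i>0$,
\begin{eqnarray*}
\det\big(B+I(\underline b)\big)\ \ge\ p^{\frac np}\,\det(B)^{1-\frac1p}\,\prod_{i=1}^n b_i^{\frac1p}.
\end{eqnarray*}
Since $\prod_i b_i=\det\big(I(\underline b)\big)$, this has the shape $\det(B+D)\ge p^{n/p}\det(B)^{1-1/p}\det(D)^{1/p}$ with $D=I(\underline b)$, and I would aim to prove exactly this. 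Note that no genuine optimization over $\underline b$ is then required: it suffices to get the inequality for each fixed configuration, and the supremum in \eqref{EB} is controlled automatically.

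To establish the determinant inequality, I would exploit that $D=I(\underline b)$ is positive definite, so $D^{1/2}$ and $D^{-1/2}$ exist, and perform the congruence $M:=D^{-1/2}BD^{-1/2}$, which is again symmetric positive definite. Then $\det(B+D)=\det(D)\det(M+I)$ and $\det(B)=\det(D)\det(M)$, and after cancelling the common factors of $\det(D)$ the inequality reduces to
\begin{eqnarray*}
\det(M+I)\ \ge\ p^{\frac np}\,\det(M)^{1-\frac1p}.
\end{eqnarray*}
Diagonalizing $M$ with eigenvalues $\mu_1,\dots,\mu_n>0$, both sides factor over the eigenvalues, so it is enough to prove the scalar inequality $\mu+1\ge p^{\frac1p}\,\mu^{1-\frac1p}$ for every $\mu>0$.

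This scalar inequality is where the constant $p^{1/p}$ gets pinned down, and it follows from weighted arithmetic–geometric mean. Applying AM–GM with weights $1-\frac1p$ and $\frac1p$ to the numbers $\frac{\mu}{1-1/p}$ and $p$ gives $\mu+1\ge \frac{p^{1/p}}{(1-1/p)^{1-1/p}}\,\mu^{1-1/p}$; since $(1-\frac1p)^{1-\frac1p}\le 1$ for $p\ge 1$, the prefactor is at least $p^{1/p}$, which is what is needed. The degenerate case $p=1$ is immediate, the right-hand side then being identically $1$.

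I expect the only real subtlety to be organizational rather than computational: recognizing that the congruence $M=D^{-1/2}BD^{-1/2}$ is the correct device to decouple the fixed matrix $B$ from the free parameters $\underline b$, turning a supremum over $n$ parameters into an eigenvalue-by-eigenvalue bound. Once that reduction is in place the weighted AM–GM step is routine, and the termwise nature of the estimate is exactly what makes the bound on $E_B$ hold uniformly in $\underline b$.
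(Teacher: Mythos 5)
Your proof is correct, and it takes a genuinely different, more self-contained route than the paper. The paper's proof quotes Bellman's theorem on the log-concavity of the determinant, $\det(\lambda U+(1-\lambda)V)\ge\det(U)^{\lambda}\det(V)^{1-\lambda}$ for positive definite $U,V$, applies it with $U=B$ and $V=\big(\tfrac{\lambda}{1-\lambda}\big)I(\underline b)$, and then tunes the free parameter to $\lambda=1-\tfrac1p$ precisely so that the exponents of the $b_i$ cancel; this yields exactly the uniform-in-$\underline b$ determinant bound $\det\big(B+I(\underline b)\big)\ge p^{n/p}\det(B)^{1-1/p}\prod_i b_i^{1/p}$ that you isolate as the heart of the matter (in fact with an extra factor $(1-\tfrac1p)^{-n(1-\frac1p)}\ge 1$, which the paper discards at the last step). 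You instead prove that bound from scratch: the congruence $M=D^{-1/2}BD^{-1/2}$ with $D=I(\underline b)$ decouples $B$ from the parameters, the determinants factor over the eigenvalues of $M$, and the scalar inequality $\mu+1\ge p^{1/p}\mu^{1-1/p}$ follows from weighted AM--GM, where you discard the same harmless factor $(1-\tfrac1p)^{-(1-\frac1p)}$ per eigenvalue, so the two final constants coincide. The two arguments are close cousins—your congruence-plus-AM--GM computation is essentially the standard proof of Bellman's theorem, inlined and specialized—but yours buys self-containedness (no external matrix inequality) and makes transparent why no genuine optimization over $\underline b$ is required, while the paper's buys brevity, since the matrix inequality is taken off the shelf and only the choice $\lambda=1-\tfrac1p$ has to be found. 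One small remark: your diagonalization step uses that $M$ is symmetric, i.e.\ that $B$ is symmetric; this is implicit in Proposition \ref{p1} (the paper's use of Bellman's theorem needs it too), so it is not a gap.
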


\begin{proof}  We use the following Lemma. 
\begin{lemma}[\cite{B}, Th. 4, p.\,128] If $U$ and $V$ are positive definite matrices, then
\begin{eqnarray*}
\det (\l U+(1-\l) V)\ge \det(U)^\l \det (V)^{1-\l},
\end{eqnarray*}
for any $0\le \l \le 1$. \end{lemma} 
Therefore 
\begin{eqnarray*}
\det \big( U+(\frac{1-\l}{\l}) V\big)\ge \l^{-n}\det(U)^\l \det (V)^{1-\l},
\end{eqnarray*}
if $0< \l \le 1$. We apply this with the choice $U= B$, $V=(\frac{\l}{1-\l} )I(\underline b)$.  We get
\begin{eqnarray*}
\det \big( B +I(\underline b)\big)&\ge& \l^{-n}\det(B)^\l \det \big((\frac{\l}{1-\l} )I(\underline b)\big)^{1-\l}
\cr &=& \l^{-n}\det(B)^\l\,\big(\frac{\l}{1-\l} \big)^{n(1-\l)}  \det (I(\underline b))^{1-\l}
\cr &=&\Big(\frac1\l\,\big(\frac{\l}{1-\l} \big)^{1-\l}\Big)^n\det(B)^\l\   \prod_{i=1}^nb_i^{1-\l}\, .
\end{eqnarray*}
Consequently, 
\begin{eqnarray*}
 \frac{\prod_{i=1}^nb_i^\frac{1}{2p}}{ \det (B+I(\underline b))^\frac12}
 &\le &\frac{1}{\det(B)^{\frac{\l}{2}}}\Big(\l\big(\frac{1-\l}{\l}\big)^{1-\l}\Big)^{\frac{n}{2}} \prod_{i=1}^nb_i^{\frac{1}{2}(\frac{1}{p}-(1-\l))}
 \cr  &= &\frac{1}{\det(B)^{\frac{\l}{2}}}\Big(\l^\l (1-\l)^{1-\l}\Big)^{\frac{n}{2}} \prod_{i=1}^nb_i^{\frac{1}{2}(\frac{1}{p}-(1-\l))}.
\end{eqnarray*}
Take $\l=1-\frac{1}{p}$ and note that  $(1-\l)^{1-\l}=p^{-\frac{1}{p}} $. We obtain
\begin{eqnarray*}
\frac{\prod_{i=1}^nb_i^\frac{1}{2p}}{ \det (B+I(\underline b))^\frac12}
 &\le &\frac{1}{\det(B)^{\frac12(1-\frac{1}{p})}}  p^{-\frac{n}{2p}}\Big(1-\frac{1}{p}\Big)^{\frac{n}{2}(1-\frac{1}{p})}
 .
\end{eqnarray*}
Therefore
\begin{eqnarray*}
 (2\pi)^{\frac{n}{2}(1-\frac{1}{p})}p^{\frac{n}{2p}}\frac{\prod_{i=1}^nb_i^\frac{1}{2p}}{ \det (B+I(\underline b))^\frac12}
 &\le &\frac{(2\pi)^{\frac{n}{2}(1-\frac{1}{p})}}{\det(B)^{\frac12(1-\frac{1}{p})}}p^{\frac{n}{2p}}\!\cdot\! p^{-\frac{n}{2p}}\Big(1-\frac{1}{p}\Big)^{\frac{n}{2}(1-\frac{1}{p})}
 \cr &= &\frac{(2\pi)^{\frac{n}{2}(1-\frac{1}{p})}}{\det(B)^{\frac12(1-\frac{1}{p})}}\Big(1-\frac{1}{p}\Big)^{\frac{n}{2}(1-\frac{1}{p})} .
\end{eqnarray*}

Whence 
\begin{eqnarray*}
E_B&\le&
\frac{(2\pi)^{\frac{n}{2}(1-\frac{1}{p})}}{\det(B)^{\frac12(1-\frac{1}{p})}} .
\end{eqnarray*}

\end{proof}

\begin{proof}[Proof of Theorem \ref{dec.ineq}] It  suffices to prove  inequality \eqref{ineq} when $f_i$ are real-valued and non-negative, for all $i=1,\ldots,n$. 
Let $\underline{\g}=(\s_1^{2}, \ldots, \s_n^{2})$. We apply Proposition \ref{p1}
 with $B=C^{-1}-\frac{1}{p}I(\underline{\g}^{-1})$,    $g_i(x)= |f_i(x)| e^{-x^2/(2p\s_i^2) }$,   $i=1,\ldots, n$.
\vskip 2 pt
We get by using also Lemma \ref{l1},
\begin{eqnarray*}
\int_{\R^n}\Big( \prod_{i=1}^n g_i(x_i)\Big) \exp\Big\{ -\frac12\langle \underline x, B\underline x\rangle\Big\}\dd \underline x
&\le & E_B\prod_{i=1}^n\Big( \int_{\R}    g_i(x)^p\dd x\Big)^\frac{1}{p}
\cr &\le & \frac{(2\pi)^{\frac{n}{2}(1-\frac{1}{p})}}{\det(B)^{\frac12(1-\frac{1}{p})}}\ \prod_{i=1}^n\Big( \int_{\R}    g_i(x)^p\dd x\Big)^\frac{1}{p}.
\end{eqnarray*}

This  is equivalently rewritten as
\begin{align*}
\int_{\R^n}\Big( \prod_{i=1}^n f_i(x_i)\Big) \exp\Big\{ &-\frac12\langle \underline x, C^{-1}\underline x\rangle\Big\}\frac{\dd \underline x}{(2\pi)^{\frac{n}{2}}\det(C)^{\frac12}} 
\cr \le &\  \frac{ \big(\prod_{i=1}^n\s_i\big)^{\frac{1}{p}}}{\det(B)^{\frac12(1-\frac{1}{p})} \det(C)^{\frac12}}  \prod_{i=1}^n\Big( \int_{\R}    |f_i(x)|^p e^{-x^2/(2\s_i^2) }\frac{\dd x}{\s_i\sqrt{2\pi}}\Big)^\frac{1}{p},
\end{align*}
namely 
\begin{eqnarray*}
\E\Big( \prod_{i=1}^n f_i(X_i)\Big)\ \le \ 
\frac{ \big(\prod_{i=1}^n\s_i\big)^{\frac{1}{p}}}{\det(B)^{\frac12(1-\frac{1}{p})} \det(C)^{\frac12}}\
 \prod_{i=1}^n\Big( \E   f_i(X_i)^p\Big)^\frac{1}{p}.
\end{eqnarray*}
\vskip 3pt 
Writing 
$$B=C^{-1}-\frac{1}{p}I(\underline{\g}^{-1})=\frac{1}{p}\,C^{-1}I(\underline{\g}^{-1})\Big(p\,I(\underline{\g})- C\Big),$$
 we have
\begin{eqnarray*}
\det(B)=\frac{1}{p^n}\,\det\big(C^{-1}I(\underline{\g}^{-1})\big)\det\Big(p\,I(\underline{\g})-C\Big)=\frac{\det\big(p\,I(\underline{\g})- C\big) }{p^n\,\det(C)\prod_{i=1}^n\s_i^2} .
\end{eqnarray*}
So that,
\begin{eqnarray*}
\E\Big( \prod_{i=1}^n f_i(X_i)\Big)
&\le &
 \frac{ \big(\prod_{i=1}^n\s_i\big)^{\frac{1}{p}}}
 {\big(\frac{\det(pI(\underline{\g})-C)}{p^n\det(C)\prod_{i=1}^n\s_i^2}\big)^{\frac12(1-\frac{1}{p})} \det(C)^{\frac12}}\ \prod_{i=1}^n\Big( \E   f_i(X_i)^p\Big)^\frac{1}{p}
\cr &=&
\frac{p^{\frac{n}{2}(1-\frac{1}{p})} \big(\prod_{i=1}^n\s_i\big)}
 { \det\big(pI(\underline{\g})-C\big)^{\frac12(1-\frac{1}{p})} \det(C)^{\frac{1}{2p}}}\ \prod_{i=1}^n\Big( \E   f_i(X_i)^p\Big)^\frac{1}{p}
.
\end{eqnarray*}

In order to estimate $ \det\big(pI(\underline{\g})-C\big)$, we recall a well-known result on Hadamard matrices.
\begin{lemma}[\cite{O},\,(2)]\label{l2}  Let $A=\{a_{i,j}, 1\le i,j\le n\}$
 and assume that
\begin{eqnarray*}
\sum_{j=1\atop j\neq i}^n|a_{i,j}|\,<\,|a_{i,i}|,\qq \qq i=1,\ldots , n.
\end{eqnarray*}
Then 
\begin{eqnarray*}\det(A)\,\ge \,\prod_{i=1}^n \Big( |a_{i,i}|-\sum_{j=1\atop j\neq i}^n|a_{i,j}|\Big).
\end{eqnarray*}\end{lemma}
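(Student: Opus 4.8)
The plan is to argue by induction on the order $n$ via a single step of Gaussian elimination, showing that the Schur complement is again strictly diagonally dominant with at least as large a dominance margin in every surviving row. Throughout write $r_i=\sum_{j\neq i}|a_{i,j}|$ for the $i$-th off-diagonal row sum, so the hypothesis reads $|a_{i,i}|>r_i$. Since the determinant of a general matrix need not be real, I would actually prove the inequality $|\det(A)|\ge\prod_{i=1}^n(|a_{i,i}|-r_i)$, which is what the induction delivers directly; in the situation where the lemma is applied the matrix $A=pI(\underline{\g})-C$ is real symmetric with positive diagonal, hence positive definite by Gershgorin, so $\det(A)>0$ and the stated form follows.

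For $n=1$ the claim is the equality $|a_{1,1}|=|a_{1,1}|$. For the inductive step, the hypothesis $|a_{1,1}|>r_1\ge0$ guarantees $a_{1,1}\neq0$, so one step of elimination on the first column gives $\det(A)=a_{1,1}\det(A')$, where $A'=(a'_{i,j})_{i,j=2}^n$ is the Schur complement, $a'_{i,j}=a_{i,j}-a_{i,1}a_{1,j}/a_{1,1}$. First I would bound the new dominance margin of a row $i\ge2$. Using $|a'_{i,i}|\ge|a_{i,i}|-|a_{i,1}||a_{1,i}|/|a_{1,1}|$ on the diagonal and $|a'_{i,j}|\le|a_{i,j}|+|a_{i,1}||a_{1,j}|/|a_{1,1}|$ off the diagonal, the diagonal leakage term together with the off-diagonal ones assembles into $(|a_{i,1}|/|a_{1,1}|)\sum_{j\neq1}|a_{1,j}|=|a_{i,1}|\,r_1/|a_{1,1}|$, so that
\[
|a'_{i,i}|-\sum_{j\neq i,\,j\ge2}|a'_{i,j}|\ \ge\ (|a_{i,i}|-r_i)+|a_{i,1}|\Big(1-\frac{r_1}{|a_{1,1}|}\Big)\ \ge\ |a_{i,i}|-r_i,
\]
the last inequality being exactly where the strict dominance $r_1<|a_{1,1}|$ of the pivot row enters.

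This shows that $A'$ satisfies the same hypotheses, with each margin at least as large as the corresponding margin of $A$; all these margins are positive, so the induction hypothesis yields $|\det(A')|\ge\prod_{i=2}^n(|a_{i,i}|-r_i)$. Since $|a_{1,1}|\ge|a_{1,1}|-r_1$ and the remaining product is nonnegative, $|\det(A)|=|a_{1,1}|\,|\det(A')|\ge\prod_{i=1}^n(|a_{i,i}|-r_i)$, completing the induction. The main obstacle is precisely the margin estimate displayed above: one must verify that after elimination the total off-diagonal leakage into row $i$ is controlled by $(r_1/|a_{1,1}|)\,|a_{i,1}|$ with $r_1/|a_{1,1}|<1$, which is the mechanism keeping the bound from deteriorating; everything else is a routine bookkeeping of the triangle inequality and of the multiplicativity of the determinant.
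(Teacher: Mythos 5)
Your proof is correct, and it is worth noting that the paper itself contains no proof of this lemma at all: it is quoted directly from Ostrowski's 1952 paper (reference \cite{O}, formula (2)) and used as a black box. So your Schur-complement induction supplies an argument the paper omits. The mechanism is sound: pivoting on $a_{1,1}$ (legitimate since $|a_{1,1}|>r_1\ge 0$), your margin estimate
$|a'_{i,i}|-\sum_{j\neq i,\,j\ge2}|a'_{i,j}|\ \ge\ (|a_{i,i}|-r_i)+|a_{i,1}|\bigl(1-r_1/|a_{1,1}|\bigr)\ \ge\ |a_{i,i}|-r_i$
is exactly right (the leakage terms $|a_{1,i}|+\sum_{j\ge2,\,j\neq i}|a_{1,j}|$ do sum to $r_1$), so the Schur complement is again strictly dominant with margins no smaller than the originals, all of which are positive, and the induction closes via $|\det(A)|=|a_{1,1}|\,|\det(A')|\ge(|a_{1,1}|-r_1)\prod_{i\ge2}(|a_{i,i}|-r_i)$. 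This elimination-based proof is the standard modern route (strict diagonal dominance is preserved under Gaussian elimination); whether it coincides with Ostrowski's original argument is immaterial here, since the paper gives none.

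You also caught a real defect in the statement as printed: for a general matrix the conclusion cannot hold with $\det(A)$ in place of $|\det(A)|$ (take $A$ the diagonal matrix with entries $-1$ and $1$: the right-hand side is $1$ but $\det(A)=-1$). Your resolution is the correct one: prove the inequality for $|\det(A)|$, and observe that in the paper's application the matrix $A=pI(\underline{\g})-C$ is real symmetric with positive diagonal and strictly dominant, hence positive definite by Gershgorin's theorem, so $\det(A)>0$ and the signed inequality follows. This caveat is genuinely needed for the paper's use of the lemma in the proof of Theorem \ref{dec.ineq}, and your write-up makes it explicit where the paper (and its citation of \cite{O}) leaves it implicit.
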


By assumption \ref{cond},
\begin{eqnarray*}
\sum_{1\le j\le n\atop j\neq i} |\E X_iX_j|\,\le \,\Big(\frac{p}{2} -1\Big)\, \s_i^2, \qq \qq i=1,\ldots, n.
\end{eqnarray*}
 Letting $pI(\underline{\g})-C= \{d_{i,j},1\le i,j\le n\}$, we have $|d_{i,i}|=(p-1)\s_i^2$ and 
\begin{eqnarray*}
\sum_{j=1\atop j\neq i}^n|d_{i,j}|\,=\, \sum_{j=1\atop j\neq i}^n|\E X_iX_j|\,<\,\Big(\frac{p}{2} -1\Big)\,\s_i^2\,<\, |d_{i,i}|,\qq  i=1,\ldots , n.
\end{eqnarray*}
Thus
$$|d_{i,i}|-\sum_{j=1\atop j\neq i}^n|d_{i,j}| \,\ge\, (p-1)\s_i^2-\Big(\frac{p}{2} -1\Big)\,\s_i^2\,=\, \frac{ p\s_i^2}{2}.$$
By Lemma \ref{l2}, it follows that 
\begin{eqnarray*}
\det\big(pI(\underline{\g})-C\big)\,\ge\, \prod_{i=1}^n\big(p\s_i^2- \sum_{1\le j\le n\atop j\neq i} |\E X_iX_j|\big) \,\ge\, \Big(\frac{ p }{2}\Big)^n\prod_{i=1}^n \s_i^2 .
\end{eqnarray*}
\noi
Thus

\begin{eqnarray*}
\E\Big( \prod_{i=1}^n f_i(X_i)\Big)
 &\le &
\frac{p^{\frac{n}{2}(1-\frac{1}{p})} \big(\prod_{i=1}^n\s_i\big)}
 { \det\big(pI(\underline{\g})-C\big)^{\frac12(1-\frac{1}{p})} \det(C)^{\frac{1}{2p}}}\ \prod_{i=1}^n\Big( \E   f_i(X_i)^p\Big)^\frac{1}{p}
\cr &\le &
\frac{ p^{\frac{n}{2}(1-\frac{1}{p})}\big(\prod_{i=1}^n\s_i\big)^{\frac{1}{p}}}
 {\big(\frac{ p }{2}\big)^n\big)^{\frac12(1-\frac{1}{p})} \det(C)^{\frac{1}{2p}}}\ \prod_{i=1}^n\Big( \E   f_i(X_i)^p\Big)^\frac{1}{p}
\cr &= &
\frac{ 2^{\frac{n}{2}(1-\frac{1}{p})}\big(\prod_{i=1}^n\s_i\big)^{\frac{1}{p}}}
 {\det(C)^{\frac{1}{2p}}}\ \prod_{i=1}^n\Big( \E   f_i(X_i)^p\Big)^\frac{1}{p}
.
\end{eqnarray*}

\end{proof}


\section{\bf Proof of Theorem \ref{p2}.}\label{s3}   
  Recall  Bump and Diaconis \cite[Th.\,4]{BD}  strong Szeg\"o limit theorem. Let $(c_k)_{k\in \Z}$ satisfy conditions \eqref{c1}  and \eqref{c2}. Let $\s(t)= \exp\{\sum_{k\in \Z} c_ke^{ikt}\}$. Then 
\begin{equation}\label{bdsslm}D_{n-1} (\s )\sim \exp\Big( nc_0 + \sum_{k=1}^\infty kc_kc_{-k}\Big).
\end{equation}
    Let $\Gamma_n={\rm Cov}(X_1, \ldots, X_n)$. By applying 
   \eqref{bdsslm} with the choice  $\s(t) = \exp\{\log f(t)\}$, we have
 \begin{equation}\label{sslm}\det (\Gamma_n) \, \sim\, \exp\Big( nc_0+ \sum_{k=1}^\infty kc_kc_{-k}\Big).
\end{equation}

As 
 $$c_0= \frac{1}{2\pi}\int_{- \pi}^{ \pi}
\log f(t) \dd t=G(f),$$    
it follows that
  \begin{eqnarray*} \det (\Gamma_n)
 &\ge & \Big(\frac{1}{1+\d_n}\Big) \exp\Big\{n G(f)+ \sum_{k=1}^\infty kc_kc_{-k}\Big\} 
,\end{eqnarray*} 
where $\d _n\downarrow 0$.  
 Whence 
 \begin{eqnarray*}
\bigg|\E\Big( \prod_{i=1}^n f_i(X_i)\Big)\bigg|
&\le &(1+\d_n)
 2^{\frac{n}{2}(1-\frac{1}{p})}\exp\Big\{ -\frac{1}{2p}\big( nG(f)+ \sum_{k=1}^\infty kc_kc_{-k}\big)\Big\}  \prod_{i=1}^n\Big( \E |  f_i(X_i)|^p\Big)^\frac{1}{p}
\cr
&= &(1+\d_n)\,b(f)^{-\frac{1}{2p}}\,
 2^{\frac{n}{2}(1-\frac{1}{p})} 
 G(f)^{-\frac{n }{2p}}\  \prod_{i=1}^n\Big( \E |  f_i(X_i)|^p\Big)^\frac{1}{p},
\end{eqnarray*}
recalling that $b(f)=\exp\big\{   \sum_{k=1}^\infty kc_kc_{-k}\big\}$
  and $G(f)$ is the geometric mean of $f$. 


\section{\bf Examples.}\label{s4}
  We list  some remarkable classes of examples. 

\subsection{} Let   $X=\{X_j, j\in \Z\}$ be a centered Gaussian stationary  sequence, and assume that $X$ has spectral density. Recall that we have the following representation
 \begin{equation} \label{sgpr} X_k= \sum_{m\in\Z} c_m\xi_{k-m},
 \end{equation}
where $(c_m)\in\ell_2(\Z)$ and $(\xi_j)$ are i.i.d. standard Gaussian. 
Note that 
$$ \E X_kX_\ell =\sum_{m\in\Z} c_mc_{m-k+\ell}. $$
Consider the sections  $X^n=\{X_1,\ldots,X_n\}$, $n\ge 1$.
\subsubsection{}Let  $c_m=|m|^{-1}$, $m\in\Z\backslash \{0\}$, $c_0=0$.  \begin{proposition}\label{gs1}  We have  \begin{eqnarray*} p(X^n) 
   &\le & 4\, (\log n)^2+ \mathcal O (\log n).\end{eqnarray*}
\end{proposition}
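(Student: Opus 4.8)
The plan is to reduce the bound on the decoupling coefficient to a computation of the covariances $\g(h)$, which in this example can be carried out essentially in closed form, and then to sum them. Since $X$ is stationary with $\E X_i^2=\g(0)$ and $\g(h)=\E X_0X_h=\sum_{m\in\Z}c_mc_{m+h}$, the definition of $p(X^n)$ together with $\g(-h)=\g(h)$ gives
\begin{eqnarray*}
p(X^n)=\frac{1}{\g(0)}\max_{1\le i\le n}\sum_{j=1}^n|\g(i-j)|\le \frac{1}{\g(0)}\Big(\g(0)+2\sum_{h=1}^{n-1}|\g(h)|\Big),
\end{eqnarray*}
because for each fixed $i$ the integers $i-j$, $1\le j\le n$, form a block of $n$ consecutive integers containing $0$, so the inner sum is dominated by the full symmetric sum. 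Here every $c_m\ge 0$, hence $\g(h)\ge 0$, and $\g(0)=\sum_{m\neq 0}m^{-2}=\pi^2/3>1$; using $1/\g(0)\le 1$ I would simply retain $p(X^n)\le 1+2\sum_{h=1}^{n-1}\g(h)$.

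Next I would evaluate $\g(h)$ for $h\ge1$ by splitting $\sum_{m\neq 0,\,m\neq-h}\frac{1}{|m|\,|m+h|}$ according to the signs of $m$ and $m+h$. The ranges $m\ge 1$ and $m\le-h-1$ each give, via the partial fraction $\frac{1}{m(m+h)}=\frac1h\big(\frac1m-\frac1{m+h}\big)$ and telescoping, the value $H_h/h$, where $H_h=\sum_{k=1}^h 1/k$. The middle range $-h+1\le m\le-1$, where $|m+h|=h-|m|$, contributes $\frac1h\sum_{k=1}^{h-1}\big(\frac1k+\frac1{h-k}\big)=2H_{h-1}/h$. Hence $\g(h)=\frac{2(H_h+H_{h-1})}{h}$, and with $H_h=\log h+O(1)$ this is $\g(h)=\frac{4\log h}{h}+O(1/h)$.

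Finally I would sum, using $\sum_{h=1}^{N}\frac{\log h}{h}=\frac{(\log N)^2}{2}+O(1)$ (Euler--Maclaurin, or comparison with $\int_1^N\frac{\log x}{x}\,dx$) together with $\sum_{h=1}^{n-1}\frac1h=O(\log n)$, to obtain $\sum_{h=1}^{n-1}\g(h)=2(\log n)^2+O(\log n)$. Combined with the first step this yields $p(X^n)\le 1+4(\log n)^2+O(\log n)=4(\log n)^2+O(\log n)$, which is the claim.

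The one genuinely substantive step is the exact evaluation of $\g(h)$: it is precisely the middle summation range $-h+1\le m\le-1$ that contributes the extra term $2H_{h-1}/h$, and this is what promotes the leading constant from $2$ to $4$. Once $\g(h)\sim 4\log h/h$ is established, the reduction and the summation asymptotics are entirely routine.
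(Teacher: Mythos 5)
Your proof is correct and follows essentially the same route as the paper: the same sign-range decomposition of $\sum_{m}\frac{1}{|m|\,|m+h|}$, with the middle range supplying the extra contribution that raises the leading constant from $2$ to $4$, followed by comparing $\sum_{h}\frac{\log h}{h}$ with $\frac{(\log n)^2}{2}$. The only differences are cosmetic: you evaluate $\g(h)$ exactly as $2(H_h+H_{h-1})/h$ by telescoping where the paper only derives the asymptotics $4\log h/h+\mathcal{O}(1/h)$, and you reduce the maximum over $i$ to the symmetric sum $\g(0)+2\sum_{h=1}^{n-1}\g(h)$ instead of the paper's per-index integral comparison and subsequent maximization over $k$.
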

The bound of  $p(X^n)$  is in fact optimal, up to some numerical constant.  As $ p(X^n)\ll n$,  this is making   inequality \eqref{ineq} effective. 
 \vskip 2 pt 
We first  prove a lemma. 
\begin{lemma} We have 
\begin{eqnarray*}  \E X_kX_\ell 
 &=& 4\ \frac{\log |k-\ell| }{|k-\ell|}+ \mathcal O \Big(\frac{1}{|k-\ell|}\Big).\end{eqnarray*}
 \end{lemma}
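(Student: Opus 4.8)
The starting point is the convolution formula $\E X_kX_\ell=\sum_{m\in\Z}c_mc_{m-k+\ell}$ recorded just above the lemma. Writing $h=k-\ell$ and substituting $m\mapsto -m$ shows that the sum is even in $h$, so I may assume $h=|k-\ell|\ge 1$ and study
\[
S(h)\ =\ \sum_{m\neq 0,\,h}\frac{1}{|m|\,|m-h|},
\]
where the values $m=0$ and $m=h$ drop out because $c_0=0$.

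The plan is to split $S(h)$ according to the sign pattern of $m$ and $m-h$ into the three ranges $m<0$, $1\le m\le h-1$, and $m>h$, and to linearise each summand by partial fractions. On the middle range $|m|\,|m-h|=m(h-m)$ and $\frac{1}{m(h-m)}=\frac1h\big(\frac1m+\frac1{h-m}\big)$, so summing over $1\le m\le h-1$ gives $\frac2h\sum_{m=1}^{h-1}\frac1m=\frac2h H_{h-1}$, where $H_n$ is the $n$-th harmonic number. On the two outer ranges I set $m=h+j$ (resp. $m=-j$) with $j\ge 1$; in both cases the summand becomes $\frac{1}{j(j+h)}=\frac1h\big(\frac1j-\frac1{j+h}\big)$, and the telescoping tail sums to $\frac1h\sum_{j\ge1}\big(\frac1j-\frac1{j+h}\big)=\frac1h H_h$ in each of the two ranges.

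Collecting the three contributions yields $S(h)=\frac2h H_{h-1}+\frac2h H_h$. Inserting the elementary asymptotics $H_n=\log n+\mathcal{O}(1)$ together with $\log(h-1)=\log h+\mathcal{O}(1/h)$ then gives $S(h)=\frac{2\log h}{h}+\frac{2\log h}{h}+\mathcal{O}(1/h)=4\,\frac{\log h}{h}+\mathcal{O}(1/h)$, which is exactly the claim with $h=|k-\ell|$. (Tracking the Euler constant explicitly one even obtains the sharper $4\frac{\log h}{h}+\frac{c}{h}+\mathcal{O}(h^{-2})$, but this refinement is not needed.)

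No step is genuinely hard: the only points requiring care are the bookkeeping of the three ranges and checking that each error term is uniformly $\mathcal{O}(1/h)$. The mild subtlety worth flagging is that the factor $4$ is not visible summand-by-summand but arises from adding the coefficient $2$ produced by the central range to the coefficient $1+1$ coming from the two tails; one must also keep in mind that the central sum involves $H_{h-1}$ rather than $H_h$, a discrepancy that only affects the $\mathcal{O}(1/h)$ remainder.
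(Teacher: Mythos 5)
Your proof is correct and takes essentially the same route as the paper: the same three-range split ($m<0$, $1\le m\le h-1$, $m>h$), the same partial-fraction linearisation, and the same bookkeeping producing the coefficient $4$ as $2$ (middle range) plus $1+1$ (the two tails). The only cosmetic difference is that you evaluate the two tail sums exactly by telescoping, $\sum_{j\ge 1}\frac{1}{j(j+h)}=\frac{H_h}{h}$, whereas the paper estimates the same sum asymptotically by splitting it at $\nu=h$; both give $\frac{\log h}{h}+\mathcal{O}(1/h)$.
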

 \begin{proof} We note that  
  $$ \E X_kX_\ell
 =\sum_{m\in\Z\atop m\neq0, m\neq |k-\ell|} \frac{1}{|m||m-|k-\ell||} ,$$
if $k\neq \ell$, and $\E X_k^2= \frac{\pi^2}{3}$.
Let $\m$ be some positive integer. Then,
 \begin{eqnarray*} \sum_{{m\in \Z\atop m\neq\m}\atop m\neq 0} \frac{1}{|m||m-\m|}
&=&  \sum_{m\ge \m +1} \frac{1}{m(m-\m)} + \sum_{m\le \m-1\atop m\neq 0}\frac{1}{|m|(\m-m)}
\cr &=& \sum_{m\ge \m +1} \frac{1}{m(m-\m)} +\sum_{m\le-1}\frac{1}{-m(\m-m)}
+ \sum_{1\le m\le \m-1}\frac{1}{m(\m-m)}
\cr &=& 2\ \sum_{\nu\ge 1} \frac{1}{\nu(\nu+\m)} + \sum_{m=1}^{\m-1}\frac{1}{m(\m-m)}.
\end{eqnarray*}
Recall that 
$\sum_{1\le m\le x} \frac{1}{m}= \log x +\g + \mathcal O (x^{-1})$,
where $\g$ is Euler's constant. 
At first,
\begin{eqnarray*}  \sum_{m=1}^{\m-1}\frac{1}{m(\m-m)}=\frac{1}{\m} 
 \sum_{m=1}^{\m-1}\Big( \frac{1}{m}+\frac{1}{\m-m}\Big)=  \frac{2}{\m} \, 
 \sum_{m=1}^{\m-1} \frac{1}{m}
 =  2\  \frac{\log \m}{\m}+ \mathcal O (\m^{-1}).\end{eqnarray*}
Next
 \begin{eqnarray*}\sum_{\nu\ge 1} \frac{1}{(\nu+\m)\nu}  &=& \sum_{1\le \nu\le \m - 1} \frac{1}{(\nu+\m)\nu} + \sum_{\nu\ge \m} \frac{1}{(\nu+\m)\nu}
 \cr &=& \sum_{1\le \nu\le \m - 1} \frac{1}{(\nu+\m)\nu}+ \mathcal O (\m^{-1})
\cr &=&\frac{1}{\m}\,  \sum_{1\le \nu\le \m - 1} \Big( \frac{1}{\nu}-\frac{1}{\nu+\m}\Big) + \mathcal O (\m^{-1})
\cr &=& \frac{\log \m}{\m} - \frac{1}{\m}\,  \sum_{\m\le h\le 2 \m - 1}\frac{1}{h}+ \mathcal O (\m^{-1})
\cr &=& \frac{\log \m}{\m} + \mathcal O (\m^{-1}). \end{eqnarray*}
Consequently, 
\begin{eqnarray*} \sum_{{m\in \Z\atop m\neq\m}\atop m\neq 0} \frac{1}{|m||m-\m|}
&=& 2\ \sum_{\nu\ge 1} \frac{1}{\nu(\nu+\m)} + \sum_{m=1}^{\m-1}\frac{1}{m(\m-m)}
\cr &=& \frac{4\log \m }{\m}+ \mathcal O (\m^{-1}).\end{eqnarray*}
And so,
\begin{eqnarray*}  \E X_kX_\ell 
 &=& 4\ \frac{\log |k-\ell| }{|k-\ell|}+ \mathcal O \Big(\frac{1}{|k-\ell|}\Big).\end{eqnarray*}
\end{proof}
 \begin{proof}[Proof of Proposition \ref{gs1}]
It follows that 
\begin{eqnarray*}  \sum_{\ell= 1\atop \ell\neq k}^n|\E X_kX_\ell| 
 &=& 4\, \sum_{\ell= 1\atop \ell\neq k}^n\frac{\log |k-\ell| }{|k-\ell|}+ \mathcal O \Big(\sum_{\ell= 1\atop \ell\neq k}^n\frac{1}{|k-\ell|}\Big)
.\end{eqnarray*}
Now,
\begin{eqnarray*}\sum_{\ell= 1\atop \ell\neq k}^n\frac{\log |k-\ell| }{|k-\ell|}&=& 
\sum_{\ell= 1}^{k-1}\frac{\log (k-\ell )}{k-\ell}+\sum_{\ell= k+1}^n\frac{\log (\ell -k )}{\ell-k}
\cr &\le & 
\int_1^{k-1}\frac{\log t}{t}\dd t+\int_1^{n-k}\frac{\log t}{t}\dd t
\cr &=& \frac{\big(\log (k-1)\big)^2 +\big((\log (n-k)\big)^2}{2}.\end{eqnarray*}
Similarly, 
\begin{eqnarray*}\sum_{\ell= 1\atop \ell\neq k}^n\frac{\log |k-\ell| }{|k-\ell|} &\ge & \frac{\big(\log (k-1)\big)^2 +\big((\log (n-k)\big)^2}{2}- (\log 2)^2
.\end{eqnarray*}
Therefore
\begin{eqnarray*}\sum_{\ell= 1\atop \ell\neq k}^n\frac{\log |k-\ell| }{|k-\ell|}&=& 
\frac{\big(\log (k-1)\big)^2 +\big((\log (n-k)\big)^2}{2}+ \mathcal O \big(1\big).\end{eqnarray*}
Also, 
\begin{eqnarray*}\sum_{\ell= 1\atop \ell\neq k}^n\frac{1 }{|k-\ell|}
&\le & 
\int_1^{k-1}\frac{\dd t}{t} +\int_1^{n-k}\frac{\dd t}{t}
\ =\ \log (k-1)+ \log (n-k).\end{eqnarray*}
Whence, \begin{eqnarray*}  \sum_{\ell= 1\atop \ell\neq k}^n|\E X_kX_\ell| 
   &= & 2\,\Big(\big(\log (k-1)\big)^2 +\big((\log (n-k)\big)^2\Big)+ \mathcal O \big(\log k+ \log (n-k)\big).\end{eqnarray*}
   We consequently get the estimate 
 \begin{eqnarray*} p(X) 
   &\le & 4\, (\log n)^2+ \mathcal O (\log n).\end{eqnarray*}
 \end{proof} 
  
\subsubsection{}Now let  $c_m=|m|^{-r}$, $m\in\Z\backslash \{0\}$ where $r\ge 1$, $c_0=0$. Then naturally $X$ has stronger asymptotical independence properties. In fact
$p(X)<\infty$, as soon as $r\ge 2$. 
\vskip 2 pt
 Indeed, by H\"older's inequality, next Proposition \ref{gs1}, 
\begin{eqnarray*}0\ \le \ \E X_kX_\ell
 &=&\sum_{m\in\Z\atop m\neq0, m\neq |k-\ell|} \frac{1}{|m|^r|m-|k-\ell||^r}
\cr &\le &\Big(\sum_{m\in\Z\atop m\neq0, m\neq |k-\ell|} \frac{1}{|m||m-|k-\ell||}\Big)^r
\cr &\le &C_r \,\Big(\frac{\log |k-\ell| }{|k-\ell|}\Big)^r,\end{eqnarray*}
a bound   from which easily follows that $p(X)<\infty$. So, this is an instance where 
Klein, Landau and Shucker's inequality \eqref{dec} directly applies. 

\begin{remark}[A pathological example]   Gaussian stationary  sequences with spectral density form a huge class, and may in particular exhibit pathological covariance functions. We provide here  a simple example of which the study relies on additive Number Theory. 
\vskip 2 pt 
Let $A\subset \N$ and $(b_j)\in\ell^2(\N)$. Consider the following special case of \eqref{sgpr},
 \begin{equation} \label{sgp1} X_k= \sum_{|m|\in A} b_{|m|}\xi_{k-m},\qq k\in \Z.
 \end{equation}
Then 
\begin{equation} \label{ex}\E X_kX_\ell=
\begin{cases}\qq\qq  0\quad & \hbox{if $k-\ell\notin A-A$},\cr 
\displaystyle{\sum_{m\in A\atop
m-k+\ell\in A} b_{|m|}b_{|m-k+\ell|}}\quad & 
  \hbox{otherwise}.
\end{cases}
 \end{equation}
Thus the   covariance function is supported on the difference set $A-A$, making the study of this example depending on additive properties of the set $A$.
\end{remark} 
\vskip 5 pt


\subsection{Hilbert type 
covariance matrices} Now consider non stationary Gaussian sequences   having  Hilbert type 
 matrices.   More precisely, let  $C_n$ be the symmetric matrix defined by
\begin{equation}\label{Polya.Szego}
C_n =\Big\{\frac{1}{a_k+a_\ell}\ ; \ k,\ell=1,\ldots, n\Big\},
\end{equation}
where $A=(a_i)_{i\ge 1}$ is a sequence of positive 
 real numbers. That $C_n$ is positive definite (and so is   a Gram matrix)   follows from the fact that 
$$\int_0^\infty \big| \sum_{k=1}^n x_k e^{-a_k t}\big|^2 \dd t\ge 0.$$
Thus $C_n$ is  the covariance matrix of a Gaussian vector, which can be described explicitly. Indeed, there
 exist in $\R^n$   vectors $u^1,\ldots, u^n$ with Gram matrix $C_n$, for instance the rows of
$C_n^{1/2}$.
Let $\{g_i\}_{1\le i\le n}$ be independent Gaussian standard random variables, and form the Gaussian vector $X^n= \{X_i\}_{1\le i\le n}$ where
$$X_i=\sum_{k=1}^n  g_ku^i_k, \qq i=1,\ldots , n.$$
We immediately see that $X^n$ has covariance matrix $C_n$. Assume that the sequence $A$ is increasing. One easily to check that $p(X^n)\asymp n$. 

\bigskip

\noindent {\bf Acknowledgements.} The author is  grateful to  Abel Klein  for a clarification of a point in Brascamp and Lieb's paper. He also  thank Estelle Basor for friendly and stimulating exchanges around Toeplitz operators and determinants generated
by symbols.



\begin{thebibliography}{99}


\bibitem{B} R. Bellman, (1997) \emph{Introduction to Matrix Analysis}, Second Ed., SIAM, Philadelphia.
\bibitem{BF}  E. L. Basor  and P. J. Forrester, (1994) Formulas for the evaluation of Toeplitz determinants with rational generating functions, \emph{Math. Nachr.} {\bf 170}, 5--18.
\bibitem{BL}  H. J. Brascamp and E. H. Lieb, (1976) Best constants in Young's inequality, its converse, and
its generalization to more than three functions, \emph{Adv. in Math.} {\bf 201}, 151--173.
\bibitem{BD}  D. Bump and P. Diaconis, (2002)  Toeplitz minors, \emph{Journal of Combinatorial Theory} Series A   {\bf  97}, 252--271.
\bibitem{D} K. M. Day, (1975) Toeplitz Matrices Generated by the Laurent Series Expansion of an Arbitrary
Rational Function, \emph{Trans. Amer. Math. Soc.} {\bf  206}, 224--245.
\bibitem{GS} U. Grenander and G. Szeg\"o, (1958)   {\sl Toeplitz forms and their applications}, Univ. of California Press, Berkeley and Los
Angeles.
 \bibitem{KLS}  A. Klein, L. J. Landau and  D. S. Shucker,   (1982)
Decoupling
inequalities for stationary Gaussian processes, \emph{Ann. Probab.}  {\bf 10},
 702--708.
  \bibitem{K}  A. Klein,  \emph{Private communication}.
 \bibitem{O} A. M. Ostrowski, (1952) Note on bounds for determinants with dominant principal diagonal, \emph{Proc. of the A.M.S.} {\bf 3}, No. 1,  26--30.
\bibitem{W} M. Weber, (2013)
 On small deviations of stationary Gaussian processes and related analytic inequalities, \emph{Sankhya A} {\bf 75},  2, 139--170. 

  \end{thebibliography}
 \end{document}